\newtheorem*{mainthm}{Main Theorem}
\newtheorem{thm}{Theorem}[section]
\newtheorem{cor}[thm]{Corollary}
\newtheorem{lem}[thm]{Lemma}
\newtheorem{rem}[thm]{Remark}
\newtheorem{prop}[thm]{Proposition}
\theoremstyle{definition}
\theoremstyle{remark}
\numberwithin{equation}{section}
\font\nt=cmr7
\def\note#1
\def\be{\begin{equation}}
\def\ee{\end{equation}}
\renewcommand{\epsilon}{\varepsilon}
\newcommand{\ra}{\rightarrow}
\newcommand{\dist}{\operatorname{dist}}
\newcommand{\TT}{{\cal T}}
\newcommand{\C}{{\mathbb C}}
\newcommand{\D}{{\mathbb D}}
\newcommand{\Hyp}{{\mathbb H}}
\newcommand{\N}{{\mathbb N}}
\newcommand{\Z}{{\mathbb Z}}
\def\B0{{\mathbf{0}}}
\newcommand{\ov}{\overline}
\renewcommand{\ra}{\rightarrow}
\renewcommand{\Re}{\text{Re }}
\newcommand{\Chat}{\hat{\C}}
\def\Empty{}
\newcommand\oplabel[1]{
  \def\OpArg{#1} \ifx \OpArg\Empty {} \else
  	\label{#1}
  \fi}
\renewcommand{\hat}{\widehat}
\renewcommand{\phi}{\varphi}
\renewcommand{\hat}{\widehat}
 \title{A note on repelling periodic points for meromorphic functions with a bounded set of  singular values}
 \author{\small Anna Miriam Benini \\%\thanks{\r Partially supported by the EU network MRTN-CT-2006-035651 CODY [Need to change this!]}\\  
\small Universit\`a di Roma Tor Vergata \\
\small v. ricerca scientifica 1 \\   
\small Roma, Italy\\ 
\small {\tt ambenini$@$gmail.com} 
}  
\begin{document}

\maketitle  
\begin{abstract}Let $f$ be a meromorphic  function with a  bounded set of  singular values and for which infinity is a logarithmic singularity. Then we show that  $f$ has infinitely many repelling periodic points for any minimal period $n\geq1$, using a much simpler argument than the corresponding results for arbitrary entire transcendental functions.\begin{footnote}{MSC classes:	37F10, 37F20. Keywords: holomorphic dynamics, meromorphic functions, transcendental functions, repelling periodic points.}
\end{footnote}
\end{abstract}

\section{Introduction}
% One-dimensional complex dynamics studies the orbits of points under the iteration $f^n:=f\circ\ldots\circ f$  of a holomorphic function $f$.
 
An entire transcendental function $f$ is a function which is holomorphic on all of the complex plane $\C$ and
for which infinity is an essential singularity. A meromorphic function is the quotient of two entire (not necessarily transcendental) functions and  can have \emph{poles}, that is points whose image equals to infinity and whose orbits are no further defined. %The set of \emph{singular values} for $f$ is the closure of the set of critical and asymptotic values for $f$. 
The  \emph{set of singular values} $S(f)$ is the set of values near  which not all branches of the inverse are well defined and univalent. $S(f)$ includes critical values, asymptotic values and any of their accumulation points.  Then the function $f:\C\setminus{f^{-1}(S(f))}\ra \C\setminus S(f)$ is an unbranched covering. In general we have that $S(f^n)=S(f)\cup f(S(f))\cup\ldots\cup f^n(S)$, where  $f^n:=f\circ\ldots\circ f$ is defined to be the composition of $f$ with itself $n$ times.

A \emph{repelling periodic point} of period $n$ for $f$ is a point such that $f^n(z)=z$ and $|(f^n)'(z)|>1$. The period $n$ is called \emph{minimal} if there is no $j<n$ such that  $f^j(z)=z$. 
The problem of the existence of  periodic points of any minimal period for any entire function $f$ goes back to the dawn of holomorphic dynamics to Fatou (\cite{Fa}) and later to Baker (\cite{BakPer}).
 A stronger conjecture was whether  for any entire transcendental function there are \emph{repelling} periodic points of any given period (except possibly period $1$) and whether there are always infinitely many such repelling periodic points for any minimal period. One of the reasons why Baker was interested in the problem of existence of repelling periodic points for entire functions is because he could show that if $f$ is entire with at least one repelling periodic point, then the centralizer of $f$ is countable (see \cite{BakPerm}). For an extensive bibliography on the subject one can consult \cite{Ber1} (See also \cite{Ber2}).

%{\red Does Baker theorem work for meromorphic functions? what is known about centralizers of meromorphic functions?}

The most general result available is the   following theorem shown in  {\cite[Theorem 1]{Ber1}}: 
\begin{thm}\label{Berg}%[Periodic Points of Almost All Periods  ]
Let $f$ be an entire transcendental function and $n\geq2$. Then $f$ has infinitely many repelling periodic points of minimal period $n$.
\end{thm}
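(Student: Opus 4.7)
The plan is to prove \thmref{Berg} via Ahlfors's theory of covering surfaces, specifically the \emph{Five Islands Theorem}. Recall that an \emph{island} of $g$ over a Jordan domain $D$ is a relatively compact simply connected component $V$ of $g^{-1}(D)$ on which $g$ restricts to a conformal homeomorphism onto $D$. Ahlfors's theorem asserts that for any five pairwise disjoint Jordan domains $D_1, \dots, D_5$ in $\C$, every transcendental meromorphic function has an island over at least one of them; iterating the argument using the transcendental growth produces infinitely many such islands accumulating at infinity.

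First I would choose five disjoint Jordan domains $D_1, \ldots, D_5$ and apply the Five Islands Theorem to $f^n$ to obtain infinitely many islands $V$ of $f^n$ over some fixed $D_i$. The decisive step is to arrange that $V \subset D_i$. When this holds, the inverse branch $(f^n|_V)^{-1}\colon D_i \to V \subset D_i$ is a holomorphic self-map of the bounded simply connected domain $D_i$; by the Schwarz-Pick lemma it is a strict contraction in the hyperbolic metric of $D_i$ and so has a unique attracting fixed point $z_0 \in V$. By inversion, $z_0$ is a repelling fixed point of $f^n$, i.e.\ $f^n(z_0)=z_0$ and $|(f^n)'(z_0)|>1$.

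Next I would rule out that some such $z_0$ has minimal period $j<n$ with $j \mid n$. For each proper divisor $j$, the holomorphic equation $f^j(z)=z$ has only finitely many solutions in the compact set $\ov{D_i}$, because $f^j-\id$ is not identically zero. Since the argument above yields infinitely many islands inside $D_i$, and distinct islands contribute distinct fixed points $z_0$, all but finitely many of them satisfy $f^j(z_0)\neq z_0$ for every proper $j\mid n$. This gives the desired infinitely many repelling points of minimal period $n$.

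The main obstacle is precisely the geometric step producing \emph{relatively compact} islands: without extra work, Ahlfors only yields islands over $D_i$ which a priori could be huge simply connected preimage components lying far from $D_i$. Overcoming this in general requires combining the Five Islands Theorem with a delicate analysis — variants of the Fatou-Baker argument or Wiman-Valiron estimates near infinity — to force the islands into a prescribed disk. This is precisely the technical difficulty that the present paper avoids under the additional hypothesis that $\infty$ is a logarithmic singularity with bounded $S(f)$: in a logarithmic tract one has explicit coordinates giving arbitrarily strong contractions for the inverse branches that enter the tract, so the analog of \lq\lq small island contained in a prescribed disk\rq\rq\ is produced by elementary means, bypassing Ahlfors's theorem entirely.
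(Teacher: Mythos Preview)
This theorem is not proved in the present paper at all: it is quoted from \cite{Ber1}, and the only information the paper gives about its proof is that it \lq\lq uses results from Wiman--Valiron theory and a version of Ahlfors Three Islands Theorem due to Hayman.\rq\rq\ Your outline is in the same spirit (Ahlfors islands plus growth estimates), and you are right that forcing an island $V$ of $f^n$ over $D_i$ to satisfy $V\subset D_i$ is exactly the place where the hard analysis (Wiman--Valiron, etc.) enters; the paper's own contribution is precisely to bypass that step under the extra hypothesis of bounded $S(f)$, as you note in your final paragraph.

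There is, however, a genuine error in your minimal-period step, independent of the acknowledged gap. You claim you would obtain \emph{infinitely many} islands of $f^n$ contained in the single bounded domain $D_i$, with distinct islands yielding distinct fixed points of $f^n$. That is impossible: $f^n(z)-z$ is a nonconstant entire function, so it has only finitely many zeros in the compact set $\ov{D_i}$. Hence there can be only finitely many pairwise disjoint islands $V\subset D_i$ over $D_i$, and your pigeonhole argument for discarding lower-period points collapses. The infinitely many islands you produce via Ahlfors accumulate at $\infty$, not inside a fixed $D_i$, and out there the set of solutions of $f^j(z)=z$ for $j\mid n$, $j<n$, can itself be infinite, so the counting argument does not transfer.

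The standard fix --- and exactly what both Bergweiler's proof and the paper's Main Theorem do --- is to build the periodic orbit with a prescribed itinerary through $n$ \emph{distinct} regions, so that minimality of the period is forced combinatorially rather than by a finiteness count. Concretely, one finds domains $U_1,\dots,U_n$ with $U_{k+1}$ an island of $f$ over (a set containing) $U_k$ and $U_1$ an island over $U_n$; the resulting contraction fixed point then has $f^{k}(z_0)\in U_{k+1}$ for all $k$, hence minimal period $n$ once the $U_k$ are disjoint. Your sketch would need to be rewritten along these lines before the Ahlfors/Wiman--Valiron machinery can be invoked.
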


 The proof in \cite{Ber1} uses results from Wiman-Valiron theory and a version of  Ahlfors Three  Islands Theorem due to Hayman.  Observe that the function $e^z+z$ has no fixed points, so that Theorem \ref{Berg} is optimal in this sense.

By restricting the class of entire transcendental  functions under consideration to entire transcendental functions with a  bounded set of singular values, we can show that there must also be infinitely many repelling  points of period $n=1$ and we do so with a considerably more elementary proof, using only logarithmic coordinates as in \cite{EL} and some considerations on the hyperbolic metric. Our proof also works for a wide  class of meromorphic functions. This proof although simple is new to  the author's knowledge, and has the  advantage of proving the existence of both repelling fixed points and repelling periodic points at once. Our proof also implicitly shows that there is exactly one repelling periodic point for each (allowable) periodic itinerary with respect to a specific  dynamically meaningful partition.

\begin{mainthm}%[Main theorem]
Let $f$ be an entire transcendental function or  a  meromorphic function with a bounded set  of singular values. If $f$ is meromorphic but not entire assume also that infinity is a logarithmic singularity. Then $f$ has infinitely many repelling periodic points of any given minimal period $n\geq 1$. 
\end{mainthm}
 
 Observe that the class of functions under consideration is not invariant under composition, since if $f$ is meromorphic there could be a sequence of singular values accumulating on a pole $P$ whose image is hence  unbounded. For an entire function $f$ on the other hand, if $S(f)$ is bounded also $S(f^n)$ is bounded for any $n$. For more on  the classification of singularities and a precise definition of logarithmic singularity see \cite{BE}.

The existence of repelling \emph{fixed} points for entire transcendental functions with finitely many singular values can be found in \cite{EL1}:

\begin{prop}[\cite{EL1}]\label{EL fix} Let $f$ be an entire transcendental function with finitely many singular values. Then $f$ has infinitely many repelling fixed points.
\end{prop}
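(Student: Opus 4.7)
The plan is to carry out in logarithmic coordinates a direct contraction argument of the kind that will be used in the Main Theorem. Since $S(f)$ is finite, choose $R>0$ large enough that $\overline{D_R}\supset S(f)$ and, using that $f$ lies in the Eremenko--Lyubich class $\mathcal{B}$ and that $\infty$ is a logarithmic singularity, that every connected component $T$ of $\widetilde W:=\exp^{-1}(f^{-1}(\C\setminus\overline{D_R}))$ is contained in the right half-plane $H:=\{\zeta:\Re\zeta>\log R\}$ and is mapped conformally onto $H$ by the lifted map $F:=\log\circ f\circ\exp$. Since $f$ is entire transcendental, the $\exp$-preimage of even a single tract of $f$ in $\C$ produces an infinite family of $2\pi i$-translates $T_k:=T_0+2\pi i k$, $k\in\Z$, so there are infinitely many logarithmic tracts.

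For each tract $T$, set $\phi_T:=(F|_T)^{-1}:H\to T\subset H$. As $\phi_T$ is a holomorphic self-map of $H$ with proper image, Schwarz--Pick makes it a strict contraction of the hyperbolic metric of $H$. A fixed point of $f$ corresponds, via $z=\exp\zeta$, to a $\zeta\in T$ solving $F(\zeta)=\zeta+2\pi im$ for some $m\in\Z$; equivalently, to a fixed point in $H$ of $\phi_{T,m}:=\phi_T\circ\tau_m$, where $\tau_m(\zeta):=\zeta+2\pi im$ is a hyperbolic isometry of $H$. Since $\phi_{T,m}$ is itself a strict hyperbolic contraction of $H$ into $T$, once a fixed point $\zeta_{T,m}\in H$ is found, the point $z_{T,m}:=\exp\zeta_{T,m}$ is a fixed point of $f$. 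Differentiating $f\circ\exp=\exp\circ F$ at $\zeta_{T,m}$ yields $f'(z_{T,m})=F'(\zeta_{T,m})$, whose modulus exceeds $1$ because $|\phi_{T,m}'(\zeta_{T,m})|<1$ by strict contraction, so $z_{T,m}$ is repelling. Letting $m$ range over $\Z$ then produces, within a single tract, infinitely many distinct repelling fixed points of $f$.

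The main obstacle is therefore verifying that each $\phi_{T,m}$ really has an interior fixed point, and not merely a boundary Denjoy--Wolff point. This is the hyperbolic-geometry heart of the argument: one must show that the image $T\subset H$ of $\phi_{T,m}$ sits sufficiently far from $\partial H\cup\{\infty\}$, in the horocyclic sense, that the parabolic Denjoy--Wolff case is ruled out. A clean way to accomplish this is to take $|m|$ large, so that the translation $\tau_m$ shifts the image of $\phi_{T,m}$ into a hyperbolically bounded region of $H$; Banach's contraction principle in the hyperbolic metric then produces the fixed point directly. This is precisely where the paper's considerations on the hyperbolic metric enter, and it is the step that will require the most care.
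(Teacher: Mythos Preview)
Your setup in logarithmic coordinates is essentially the same framework as the paper's proof of the Main Theorem: your logarithmic tracts $T_k$ play exactly the role of the paper's fundamental domains $F$, and your $\phi_{T,m}$ (for varying $m$) are precisely the inverse branches $\psi_F$ (for varying $F$). So up to the last paragraph you are on the paper's track.

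The gap is in your proposed mechanism for ruling out the Denjoy--Wolff boundary case. You write that for $|m|$ large the translation $\tau_m$ ``shifts the image of $\phi_{T,m}$ into a hyperbolically bounded region of $H$''. This is false: since $\tau_m(H)=H$, the image $\phi_{T,m}(H)=\phi_T(\tau_m(H))=\phi_T(H)=T$ is \emph{independent of $m$}. Moreover $T$ is never hyperbolically bounded in $H$, because every logarithmic tract is unbounded toward $\Re\zeta=+\infty$; in particular $\infty\in\partial_\infty H$ lies in $\overline{T}$ and is a perfectly legitimate candidate for a Denjoy--Wolff point. Schwarz--Pick on $H$ gives only pointwise strict contraction, with ratio tending to $1$ near $\partial H$, so Banach's theorem in $(H,d_H)$ is not available and the boundary case is not excluded by your argument.

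What actually closes this gap---and this is the content of the paper's Lemma~\ref{Local} and Corollary~\ref{strictlemma}---is to work in the hyperbolic metric of $T$ rather than of $H$. Since the $2\pi i$-translates of $T$ are pairwise disjoint, $T$ contains no vertical segment of length $\geq 2\pi$, hence $\lambda_T\geq 1/(2\pi)$ everywhere on $T$. Comparing with $\lambda_H(\zeta)=1/(2(\Re\zeta-\log R))$ gives
\[
\frac{\lambda_H(\zeta)}{\lambda_T(\zeta)}\;\leq\;\frac{\pi}{\Re\zeta-\log R}\qquad(\zeta\in T),
\]
which is uniformly $\leq\kappa<1$ once $R$ is enlarged so that $\overline{T}\subset\{\Re\zeta>\log R+\pi\}$ (equivalently, once the corresponding fundamental domain avoids $\overline{D}$). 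Then $\phi_{T,m}:(T,d_T)\to(T,d_T)$, factored as the inclusion $T\hookrightarrow H$ followed by the isometry $\phi_T\circ\tau_m:(H,d_H)\to(T,d_T)$, is a uniform $\kappa$-contraction; Banach in the complete metric $(T,d_T)$ yields an interior fixed point, for every $m\in\Z$. Your computation $f'(z_{T,m})=F'(\zeta_{T,m})$ and the distinctness argument then go through unchanged.
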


In fact, the proof of Proposition~\ref{EL fix}  in \cite{EL1} (which is different from ours) can be applied also to show the existence of infinitely many repelling fixed points for  entire transcendental functions with a bounded set of singular values, while   an additional argument is needed to say that there are infinitely many repelling periodic points for any minimal period (\cite{Er}). The proof also   generalizes to meromorphic  functions belonging to  the class considered in the Main Theorem, but only proves the existence of fixed points and does not show  that the points in question are repelling.

Finally, the proof of existence of infinitely many repelling \emph{fixed} points for entire transcendental functions and  meromorphic functions with order different from zero (always with a bounded set of singular values) can be found in \cite[Theorem 2, Theorem 3]{LZ}, with some  additional information on the multipliers of these points. Their proof is quite different from the one presented in this paper. In \cite{Zh} one can find additional results for periodic points for some classes of  transcendental functions,  phrased in the language of Nevanlinna theory.

\subsection*{Acknowledgments}
This work was inspired by the proof in \cite{De}. 
The author is thankful to Carsten Petersen,  Filippo Bracci, N\'uria Fagella and Pavel Gumenyuk for useful discussions on this topic. The author would also like to thank Alexandre Eremenko and  Jian-Hua Zheng for pointing out the references \cite{EL1}, \cite{LZ}, and \cite{Zh}. I thank the referee for his/her comments and for the celerity in revising this paper. This work was  supported by the ERC grant HEVO - Holomorphic Evolution Equations n. 277691. Part of this work was carried out during the conference 'Ergodic Theory and Complex Dynamics' at the Erwin Schrodinger Institute in Vienna.

\section{Proof of the Main Theorem}
The proof is based on the tool of logarithmic coordinates introduced by Eremenko and Lyubich in \cite{EL}, slightly modified for meromorphic functions. 
The next lemma is a  basic fact in algebraic topology, see for example \cite{Ha} for the general theory about coverings.  Let $\Hyp$ denote the right half plane $\Hyp:=\{z\in\C;\;\Re z>0\}$.

\begin{lem}[Coverings of $\D^*$]\label{Coverings}
 Let $U\subset\hat{\C}$, $\D$ be the unit disk and $\D^*=\D\setminus\{0\}$. If $f$ is a holomorphic  covering from $U\ra \D^*$, then
 \begin{itemize}
\item[(a)]  either $U$ is biholomorphic to $\D^*$  and $f$ is equivalent to $z^d$- that is, there exists a biholomorphic map $\phi: U\ra \D^*$ such that $f=z^d\circ\phi$;
\item[(b)]  or $U$ is simply connected and $f$  is a universal covering, hence equivalent to the exponential map - that is, there exists a biholomorphic map $\phi: U\ra\Hyp$ such that $f=\exp\circ\phi$.
 \end{itemize}
\end{lem}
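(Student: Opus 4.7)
The plan is to reduce this lemma to the classical classification of connected coverings of $\D^*$ and then promote the resulting topological equivalence to a biholomorphism, using the fact that all maps in sight are holomorphic local biholomorphisms.

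First I would recall that $\D^*$ is homotopy equivalent to the circle, so $\pi_1(\D^*)\cong\Z$, whose subgroups are precisely $d\Z$ for $d\in\{0,1,2,\dots\}$. By the standard classification of connected coverings (see, e.g., \cite{Ha}), a connected covering $f:U\to\D^*$ is determined, up to isomorphism of coverings, by a conjugacy class of subgroups of $\pi_1(\D^*)$. The finite model covers are the power maps $p_d:\D^*\to\D^*$, $z\mapsto z^d$, realizing the subgroup $d\Z$ for $d\geq 1$; the universal cover is realized by a suitable branch of the exponential map from $\Hyp$ to $\D^*$ (up to a harmless sign adjustment, since $\exp$ on $\{\Re z>0\}$ naturally lands in $\{|w|>1\}$; composing with $z\mapsto -z$ corrects this and can be absorbed into $\phi$). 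Applying the classification to our $f$ yields a dichotomy: either $f$ has some finite degree $d\geq 1$ and there is a homeomorphism $\phi:U\to\D^*$ with $f=p_d\circ\phi$, so $U$ is topologically $\D^*$; or $f$ is the universal covering, in which case $U$ is simply connected and there is a homeomorphism $\phi:U\to\Hyp$ with $f=\exp\circ\phi$.

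Finally, I would upgrade $\phi$ from a homeomorphism to a biholomorphism, which is the only point where the complex-analytic hypothesis on $f$ enters. On any small evenly covered neighborhood we may write $\phi=g^{-1}\circ f$ with $g\in\{p_d,\exp\}$, where $g^{-1}$ denotes the appropriate local inverse branch. Both $g$ and $f$ are holomorphic local biholomorphisms, so $\phi$ is as well; since $\phi$ is already a global homeomorphism, it is a global biholomorphism. I do not expect a serious obstacle in this argument: the mathematical content is the topological classification of covers of $\D^*$, and holomorphy is only needed at the end to match the holomorphic structure. The one place that deserves mild care is the sign/parameterization convention for the exponential, as noted above.
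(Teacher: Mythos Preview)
Your argument is correct and is exactly the standard proof via the classification of connected coverings by subgroups of $\pi_1(\D^*)\cong\Z$, followed by the routine upgrade to a biholomorphism. The paper does not actually give a proof of this lemma; it simply states it as a basic fact in algebraic topology and refers to \cite{Ha}, so your write-up is precisely the argument the citation points to.
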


If $S(f)$ is bounded there exists a disk $D$ such that $\Omega:=\C\setminus \ov{D}$ contains no singular values. In particular for any  connected component $U $ of $f^{-1}(\Omega)$, $f$ is an (unbranched) covering from $U$ to $\Omega$. Since $\Omega\sim \D^*$, by Lemma~\ref{Coverings} for any  connected component $U$ of $f^{-1}(\Omega)$  either $U$ is  bounded and $f:U\ra\Omega$ is equivalent to  $z^d$, or $U$ is unbounded, simply connected, and $f:U\ra\Omega$ is equivalent to  $e^z$. In the first case, $U$ contains exactly one  pole $P$, and $f:U\setminus\{P\}\ra \Omega$ is a covering of degree $d$ where $d$ is the order of the pole. These bounded  components are of no interest to us.

 In the second case $U$ is called a \emph{tract}. If infinity is a logarithmic singularity there is always at least one such tract. If $f$ is entire transcendental, all connected components of the preimage of $\Omega$ are tracts. Tracts are simply connected unbounded sets. Moreover, tracts have disjoint closures and accumulate only at infinity; that is, if $z_n$ is a sequence of points all belonging to different tracts, then $z_n\ra\infty$ (see \cite{EL}).    Let $\TT$ denote the union of all tracts. The following lemma can be found in \cite[Lemma 2.1]{BF}

\begin{lem} There exists a simple (analytic) curve $\delta\subset \Omega\setminus \ov{\TT}$ connecting $\ov{D}$ to infinity.
\end{lem}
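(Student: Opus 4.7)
My first step is to verify that $\overline{\TT} = \bigcup_i \overline{T_i}$ is closed in $\C$: the tract closures being pairwise disjoint and accumulating only at $\infty$, only finitely many meet any compact set, so the union is locally finite. Then I choose the radius of $D$ generically so that the real-analytic set $f^{-1}(\partial D)$ meets $\partial D$ transversally; consequently $\partial D \cap \overline{\TT}$ is finite. I fix $z_0 \in \partial D \setminus \overline{\TT}$ together with a small open disk $B \ni z_0$ disjoint from $\overline{\TT}$. The half-disk $B \cap \Omega$ is a nonempty open connected subset of $\Omega \setminus \overline{\TT}$, and I let $V$ denote its connected component in $\Omega \setminus \overline{\TT}$.

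The heart of the argument is to show that $V$ is unbounded. I pass to the Riemann sphere and set $\hat{\TT} = \overline{\TT} \cup \{\infty\}$; this is compact and connected because each $\overline{T_i}$ is connected and, being unbounded, accumulates at $\infty$. Each tract $T_i$ is simply connected and unbounded in $\C$, so every component of $\C \setminus T_i$ is unbounded; by the topological description of Lemma~\ref{Coverings} (which represents $T_i$ as a conformal half-plane $\Hyp$), the closure $\overline{T_i}^{\hat{\C}}$ has connected $\hat{\C}$-complement, i.e.\ it does not separate $\hat{\C}$. The pairwise intersections $\overline{T_i}^{\hat{\C}} \cap \overline{T_j}^{\hat{\C}}$ for $i\neq j$ collapse, by the disjointness of closures in $\C$, to the single point $\{\infty\}$, which is connected. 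An induction on $k$ using Janiszewski's theorem then yields that every finite union $\bigcup_{i=1}^{k} \overline{T_i}^{\hat{\C}}$ fails to separate $\hat{\C}$. For any $R>0$ large, let $k_R$ be the finite number of tracts meeting $\{|z|\leq R\}$; applied to $z_0$ and some $q \in \{|z|=R\}\setminus \overline{\TT}$, the above produces an arc $\alpha_R$ from $z_0$ to $q$ inside $\hat{\C}\setminus \bigcup_{i\leq k_R} \overline{T_i}^{\hat{\C}}$. Truncating $\alpha_R$ at its first crossing of $\{|z|=R\}$ keeps it inside $\{|z|\leq R\}$, where by the choice of $k_R$ no other tracts live, so the truncated arc lies in $\{|z|\leq R\}\setminus \overline{\TT}$; a slight perturbation pushes it off $\overline{D}\setminus\{z_0\}$, keeping it inside $\Omega$. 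Starting from $z_0$ and connected, the arc lies entirely in $V$, so $V$ contains points of modulus $R$. As $R$ is arbitrary, $V$ is unbounded.

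Given the open connected unbounded set $V \subset \Omega \setminus \overline{\TT}$ with $z_0 \in \partial V \cap \partial D$, the simple analytic curve $\delta$ is obtained by a standard smoothing procedure: pick $p_n \in V$ with $|p_n| \to \infty$, join $z_0$ and the $p_n$ by piecewise-smooth simple arcs inside $V$ (possible by path-connectedness of the open set), and replace the resulting union by an injective analytic curve, for instance by conformally mapping a simply connected subdomain $V' \subset V$ exhausting these arcs onto $\D$ and pulling back an analytic radial ray. I expect the main obstacle to be the Janiszewski-induction step together with the verification that each $\overline{T_i}^{\hat{\C}}$ is non-separating: this is precisely where the hypothesis of pairwise disjoint tract closures enters crucially, since only then do the $\hat{\C}$-intersections of tract closures collapse to the single connected point $\{\infty\}$ needed for Janiszewski; the subsequent arc-truncation and smoothing are routine.
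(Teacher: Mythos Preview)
The paper does not supply its own proof of this lemma; it simply cites \cite[Lemma~2.1]{BF}. So there is no in-paper argument to compare with, and your attempt must stand on its own. Your overall strategy---show that some component of $\Omega\setminus\ov{\TT}$ abutting $\partial D$ is unbounded via a Janiszewski-type induction on tract closures in $\hat\C$, then thread an analytic arc through it---is reasonable and close in spirit to what is done in \cite{BF}.

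There is, however, a genuine gap at the very start. From ``$f^{-1}(\partial D)$ meets $\partial D$ transversally'' you conclude that $\partial D\cap\ov{\TT}$ is finite. This inference is false: transversality controls only $\partial D\cap\partial\TT$, not $\partial D\cap\TT$, and tracts may well contain arcs of $\partial D$ in their \emph{interior}. For $f(z)=e^z$ with $D=\{|z|<\tfrac12\}$ one has the single tract $T=\{\Re z>-\log 2\}$, and the entire circle $\partial D$ lies inside $T$; thus $\partial D\setminus\ov{\TT}=\emptyset$ and no $z_0$ as you describe exists. Even for $D$ the unit disk, $\partial D\cap\ov T$ is a full arc, not a finite set. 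What you actually need---a point $z_0\in\partial D\setminus\ov{\TT}$---can be arranged, but by a different argument: pick $v\in D$ not omitted, choose $w\in f^{-1}(v)$ with $|w|$ large (possible since $f^{-1}(v)$ is infinite), so that $w\in\Omega\setminus\ov{\TT}$ because $f(\ov{\TT})\subset\ov\Omega$; then enlarge $D$ to a disk of radius $|w|$, for which the new tract closures sit inside the old tracts and hence still miss $w$. A second, smaller imprecision: after truncating your arc $\alpha_R$ to $\{|z|\le R\}$ you ``push it off $\ov D\setminus\{z_0\}$'', but nothing so far prevents $\alpha_R$ from crossing $D$, and the required perturbation is not automatic since $\ov D\cap\ov{T_i}$ need not be connected (so you cannot simply add $\ov D$ to the Janiszewski induction). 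One clean fix is to run the whole argument with basepoint $z_0'\in B\cap\Omega$ and to choose $q$ and the arcs more carefully so that they stay in $\Omega$ from the outset.
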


The preimages of $\delta$ partition each tract  into countably many \emph{fundamental domains} $F_i$ such that $f: F_i\ra\Omega\setminus\delta$ is univalent for any $i$.
The $F_i $ also do not accumulate on any compact set so only finitely many of them intersect $\ov{D}$.

Each tract $T$, each fundamental domain  $F$,  and the set  $\Omega\setminus\delta$ are all simply connected open sets admitting a hyperbolic metric. For any set $U$ admitting a hyperbolic metric we denote its density by   $\lambda_U$, and the hyperbolic distance in $U$ by $d_U$.

We now show that in a neighborhood of infinity the hyperbolic density of any fundamental domain $F$ is much larger than the hyperbolic density of the tract that contains it, and hence it is much larger than the hyperbolic density of $\Omega\setminus\delta$.

\begin{lem}\label{Local} For any fundamental domain $F$ there exists $\kappa_F<1$ and a neighborhood $U_F$ of infinity in $\Chat$ such that 

\begin{equation}
\frac{\lambda_{\Omega\setminus\delta}(z)}{\lambda_{F}(z)}\leq \kappa_F<1 \ \forall z\in F\cap U_F.
\end{equation}
In fact, by restricting $U_F$, $\kappa_F$ can be taken to be arbitrarily small.
\end{lem}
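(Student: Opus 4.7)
The plan is to pass to the logarithmic coordinate on the tract and exploit that $f$ grows exponentially there, while the hyperbolic density of $\Omega\setminus\delta$ decays like $1/|z|$. Let $\phi:T\to\Hyp$ be the conformal isomorphism realizing the logarithmic singularity, so that $f = R\exp\circ\phi$ on $T$, where $R$ is the radius of $D$. Under $\phi$ the fundamental domain $F$ corresponds to a region $\phi(F)\subset\Hyp$ bounded by two adjacent preimages of $\delta$ under $\exp$, which are $2\pi i$--translates of one another; hence $\phi(F)$ is contained in a horizontal strip of height $2\pi$. The hyperbolic density of such a strip is at least $1/2$ everywhere, so by monotonicity of the hyperbolic density under inclusion, $\lambda_{\phi(F)}\geq 1/2$ on $\phi(F)$, and conformal invariance gives
\[
\lambda_F(z) = \lambda_{\phi(F)}(\phi(z))\,|\phi'(z)| \;\geq\; |\phi'(z)|/2.
\]

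Next I would estimate $|\phi'(z)|$ from below using Koebe's $1/4$ theorem applied to the univalent map $\phi:T\to\Hyp$, in the form $|\phi'(z)|\geq \dist(\phi(z),\partial\Hyp)/(4\,\dist(z,\partial T))$. Since $\dist(\phi(z),\partial\Hyp) = \Re\phi(z) = \log(|f(z)|/R)$, and since the curve $\partial T$ passes through a bounded region so that $\dist(z,\partial T)\leq |z|+O(1)$, this yields
\[
|z|\,|\phi'(z)| \;\gtrsim\; \log|f(z)|,
\]
which tends to $+\infty$ as $z\to\infty$ in $T$ (this being the content of $\infty$ being a logarithmic singularity).

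For the upper bound on $\lambda_{\Omega\setminus\delta}(z)$ I would use the standard Koebe estimate $\lambda_{\Omega\setminus\delta}(z)\leq 2/\dist(z,\partial(\Omega\setminus\delta))$. For $z\in F$ with $|z|$ large, $\dist(z,\partial D)\geq |z|-R$, and $\dist(z,\delta)$ is also comparable to $|z|$: this uses that $\delta\subset\Omega\setminus\overline{\TT}$ is disjoint from the closure of the tract $T\supset F$ and that tracts do not accumulate on $\delta$ except at infinity. Hence $\lambda_{\Omega\setminus\delta}(z)\leq C/|z|$ on $F\cap U_F$ for $U_F$ a sufficiently small neighborhood of $\infty$ in $\Chat$.

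Combining the three estimates gives
\[
\frac{\lambda_{\Omega\setminus\delta}(z)}{\lambda_F(z)} \;\leq\; \frac{2C/|z|}{|\phi'(z)|} \;\lesssim\; \frac{1}{\log|f(z)|} \;\longrightarrow\; 0
\]
as $z\to\infty$ in $F$, which proves the lemma and shows that $\kappa_F$ can be made arbitrarily small by further shrinking $U_F$. The hardest step will be the lower bound $\dist(z,\delta)\gtrsim |z|$: this is a global geometric fact about the position of $\delta$ relative to the tract $T$ near infinity, which must be justified from the choice $\delta\subset\Omega\setminus\overline{\TT}$ and the Eremenko--Lyubich structure of the tracts.
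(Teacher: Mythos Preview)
Your approach has the right skeleton and lands on the correct final estimate $\lambda_{\Omega\setminus\delta}/\lambda_F\lesssim 1/\Re\phi(z)$, but the step you flag as hardest is a genuine gap, not merely a technicality. The claim $\dist(z,\delta)\gtrsim|z|$ for $z\in F$ near infinity need not hold: nothing prevents $\delta$ from running asymptotically close to $\partial T$, so a point $z\in F$ with $|z|$ large but $\dist(z,\partial T)$ small can also have $\dist(z,\delta)$ small. What \emph{is} true, and is all you need, is the trivial bound $\dist(z,\delta)\geq\dist(z,\partial T)$, valid because $\delta\subset\Omega\setminus\overline{\TT}$ lies outside $\overline T$. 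Feeding this into your Koebe estimate $|\phi'(z)|\geq\Re\phi(z)/(4\dist(z,\partial T))$ together with $\lambda_{\Omega\setminus\delta}(z)\leq 2/\dist(z,\partial(\Omega\setminus\delta))$, the factors $\dist(z,\partial T)$ cancel and you get $\lambda_{\Omega\setminus\delta}/\lambda_F\lesssim 1/\Re\phi(z)$ directly, with no need to compare $\dist(z,\delta)$ to $|z|$.

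Once repaired this way, your argument is the paper's proof in Euclidean disguise. The paper avoids Koebe and distance estimates altogether by working purely with hyperbolic densities: since $\phi$ is a conformal isomorphism, the \emph{ratio} $\lambda_T/\lambda_F$ is conformally invariant and equals $\lambda_{\Hyp}(\phi(z))/\lambda_{\phi(F)}(\phi(z))\leq \pi/\Re\phi(z)$ in one line; and since $T\subset\Omega\setminus\delta$ near infinity, the comparison principle gives $\lambda_{\Omega\setminus\delta}\leq\lambda_T$ there, replacing your entire $\dist(z,\delta)$ discussion. Your route through $|\phi'|$ and $\dist(z,\partial T)$ is just the Koebe--quarter translation of these two hyperbolic inequalities; the paper's version is shorter and sidesteps the problematic step. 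Two minor points: the infimum of the hyperbolic density on a strip of height $2\pi$ is $1/4$, not $1/2$ (irrelevant to the argument); and ``$\log|f(z)|\to\infty$ as $z\to\infty$ in $F$'' is not the content of $\infty$ being a logarithmic singularity --- it follows instead from the fact that $\phi(F)$ lies in a horizontal strip of height $2\pi$, so that $\phi(z)$ can escape to infinity only with $\Re\phi(z)\to\infty$.
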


\begin{proof}
Let $T$ be the tract containing $F$. Since $f: T\rightarrow \Omega\sim \D^*$ is a universal covering, there exists a conformal isomorphism $\phi: T\rightarrow \Hyp$ where $\Hyp:=\{z\in\C: \Re z>0\}$  and such that $f|_T=\exp\circ\phi$ (see Lemma~\ref{Coverings} part (b) and Figure~\ref{LocalFig}). Using this equation,  the fact that the exponential is $2\pi i $ periodic and the fact that the $F_i$ are fundamental  domains for $f$ we obtain the following: For any fundamental domain $\tilde{F}\neq F$ contained in $T$ we have that $\phi(\tilde{F})=\phi(F)+2\pi i k$ for some $k\in\Z$. In particular $\phi(F)$ contains no vertical segments of diameter bigger than $2\pi$, so for any $z\in \phi(F)$, the Euclidean distance $\dist(z,\partial F)$ is less than  or equal to $\pi$.  Since $\phi(F)$ is simply connected, by standard estimates on the hyperbolic metric (see e.g. \cite[Theorem 8.6]{BM}) we have that

\[\lambda_{\phi(F)}(z)\geq\frac{1}{2\pi}\ \ \forall z\in \phi(F).\]

Since $\phi$ is a conformal  isomorphism it is an isometry for the hyperbolic metric (see e.g. \cite[Theorem 6.3]{BM}), hence using the fact that $\lambda_{\Hyp}(z)=\frac{1}{2\Re z}$ we have that

\begin{equation} \label{Star}
\frac{\lambda_T(z)}{\lambda_F(z)}=\frac{\lambda_{\phi(T)}(\phi(z))}{\lambda_{\phi(F)}(\phi(z))}=\frac{\lambda_{\Hyp}(\phi(z))}{\lambda_{\phi(F)}(\phi(z))}\leq \frac{\pi}{\Re\phi(z)} \ \ \forall z\in F.
\end{equation}

In particular $\frac{\lambda_T(z)}{\lambda_F(z)}$ can be made arbitrarily small by letting $\Re \phi(z)$ be arbitrarily large, that is the same as saying, $|z|$ be arbitrarily large.

Now choose $\kappa_F<1$,  let $U_F\subset\Chat$ be a neighborhood of infinity small enough such that $\frac{\lambda_{\Omega\setminus\delta}(z)}{ \lambda_T(z)} \leq 1$in $U_F$ (this can be done using the comparison principle for the hyperbolic metric because $T\subset \Omega\setminus\delta$ in a neighborhood of infinity) and such that $\Re \phi(z)\geq\frac{\pi}{ \kappa_F}$ for $U_F\cap F$ (this can be done because $\Re\phi(z)\rightarrow\infty$ as $|z|\rightarrow\infty$).

It follows using (\ref{Star}) that

\[\frac{\lambda_{\Omega\setminus\delta}(z)}{\lambda_F(z)}\leq \frac{\lambda_T(z)}{\lambda_F(z)}\leq \frac{\pi}{\Re\phi(z)} \leq\kappa_F \ \ \forall z\in F\cap U_F\]

as required. 

\end{proof}

\begin{cor}\label{strictlemma}
For any fundamental domain $F$ not intersecting $D$ there exists $\kappa_F<1$  such that 

\begin{equation}\label{strict}
\frac{\lambda_{\Omega\setminus\delta}(z)}{\lambda_{F}(z)}\leq \kappa_F<1 \ \forall z\in F.
\end{equation}
\end{cor}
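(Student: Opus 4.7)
The plan is to combine Lemma~\ref{Local}, which controls the ratio near infinity, with a compactness argument that handles the bounded part of $F$. First, apply Lemma~\ref{Local} to obtain a neighborhood $U_F\subset\hat{\C}$ of $\infty$ and a constant $\kappa_1<1$ such that $h(z):=\lambda_{\Omega\setminus\delta}(z)/\lambda_F(z)\le \kappa_1$ for every $z\in F\cap U_F$. Since $F\cap D=\emptyset$ and, being contained in the tract $T$, $F$ does not meet $\delta$, one has $F\subsetneq\Omega\setminus\delta$; the strict form of the Schwarz--Pick lemma applied to this inclusion gives $h(z)<1$ pointwise on $F$, and $h$ is continuous on $F$.

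Next, consider $K:=F\setminus U_F$. As $\hat{\C}\setminus U_F$ is compact and does not contain $\infty$, the set $K$ is bounded in $\C$, so its closure $\overline{K}$ is compact. I would then extend $h$ continuously to $\overline{K}$ by declaring $h(z_0):=0$ at every $z_0\in\partial F\cap\overline{K}$. Each such $z_0$ lies either on a preimage of $\delta$, which sits inside a tract and hence in the interior of $\Omega\setminus\delta$, or on the portion of $\partial T$ that bounds $F$; in the latter case, since $F$ is bounded away from $\overline{D}$, this portion is also in the interior of $\Omega\setminus\delta$. In both cases $\lambda_{\Omega\setminus\delta}$ is finite near $z_0$ while $\lambda_F(z)\to\infty$ as $z\to z_0$ from within $F$, so $h(z)\to 0$. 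This shows that the extension is continuous on the compact set $\overline{K}$.

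Being continuous on the compact set $\overline{K}$, $h$ attains its maximum $\kappa_2$, and this maximum must satisfy $\kappa_2<1$: if it is achieved at an interior point of $F$ then $\kappa_2<1$ by the strict Schwarz--Pick inequality above, while if it is achieved on $\partial F\cap\overline{K}$ then $\kappa_2=0$. Setting $\kappa_F:=\max(\kappa_1,\kappa_2)<1$ combines the two regions $F\cap U_F$ and $K$ and yields the required uniform bound on all of $F$.

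I expect the only delicate step to be the verification that every point of $\partial F\cap\overline{K}$ lies in the interior of $\Omega\setminus\delta$. This rests on the hypothesis that $F$ does not intersect $D$ (so that the part of $\partial F$ coming from $\partial T$ stays away from $\overline{D}$) together with the description of $\partial F$ as a union of preimages of $\delta$ and a piece of $\partial T$; preimages of $\delta$ are automatically disjoint from $\overline{D}\cup\delta=\partial(\Omega\setminus\delta)$ since they sit inside tracts.
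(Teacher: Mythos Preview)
Your argument is correct and follows essentially the same route as the paper: invoke Lemma~\ref{Local} near infinity and use a compactness/comparison-principle argument on the bounded part of $F$. The paper compresses your extension-by-zero construction into the single phrase ``$F$ is compactly contained in $\Omega\setminus\delta$ except in a neighborhood of infinity,'' and the ``delicate step'' you single out is exactly what that phrase encodes.
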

 \begin{proof}
 Since $F$ is compactly contained in $\Omega\setminus\delta$ except in a neighborhood of infinity, the inequality follows from the comparison principle except in a neighborhood of infinity, where it follows from Lemma~\ref{Local}. Observe that in  this case, in order for (\ref{strict}) to hold for all $z\in F$, $\kappa_F$  can no longer be chosen to be arbitrarily small.
 \end{proof}

 \begin{figure}[hbt!]
\begin{center}
\def\svgwidth{13cm}
\begingroup%
  \makeatletter%
  \providecommand\color[2][]{%
    \renewcommand\color[2][]{}%
  }%
  \providecommand\transparent[1]{%
    \errmessage{(Inkscape) Transparency is used (non-zero) for the text in Inkscape, but the package 'transparent.sty' is not loaded}%
    \renewcommand\transparent[1]{}%
  }%
  \providecommand\rotatebox[2]{#2}%
  \ifx\svgwidth\undefined%
    \setlength{\unitlength}{1187.88217773bp}%
    \ifx\svgscale\undefined%
      \relax%
    \else%
      \setlength{\unitlength}{\unitlength * \real{\svgscale}}%
    \fi%
  \else%
    \setlength{\unitlength}{\svgwidth}%
  \fi%
  \global\let\svgwidth\undefined%
  \global\let\svgscale\undefined%
  \makeatother%
  \begin{picture}(1,0.78372098)%
    \put(0,0){\includegraphics[width=\unitlength]{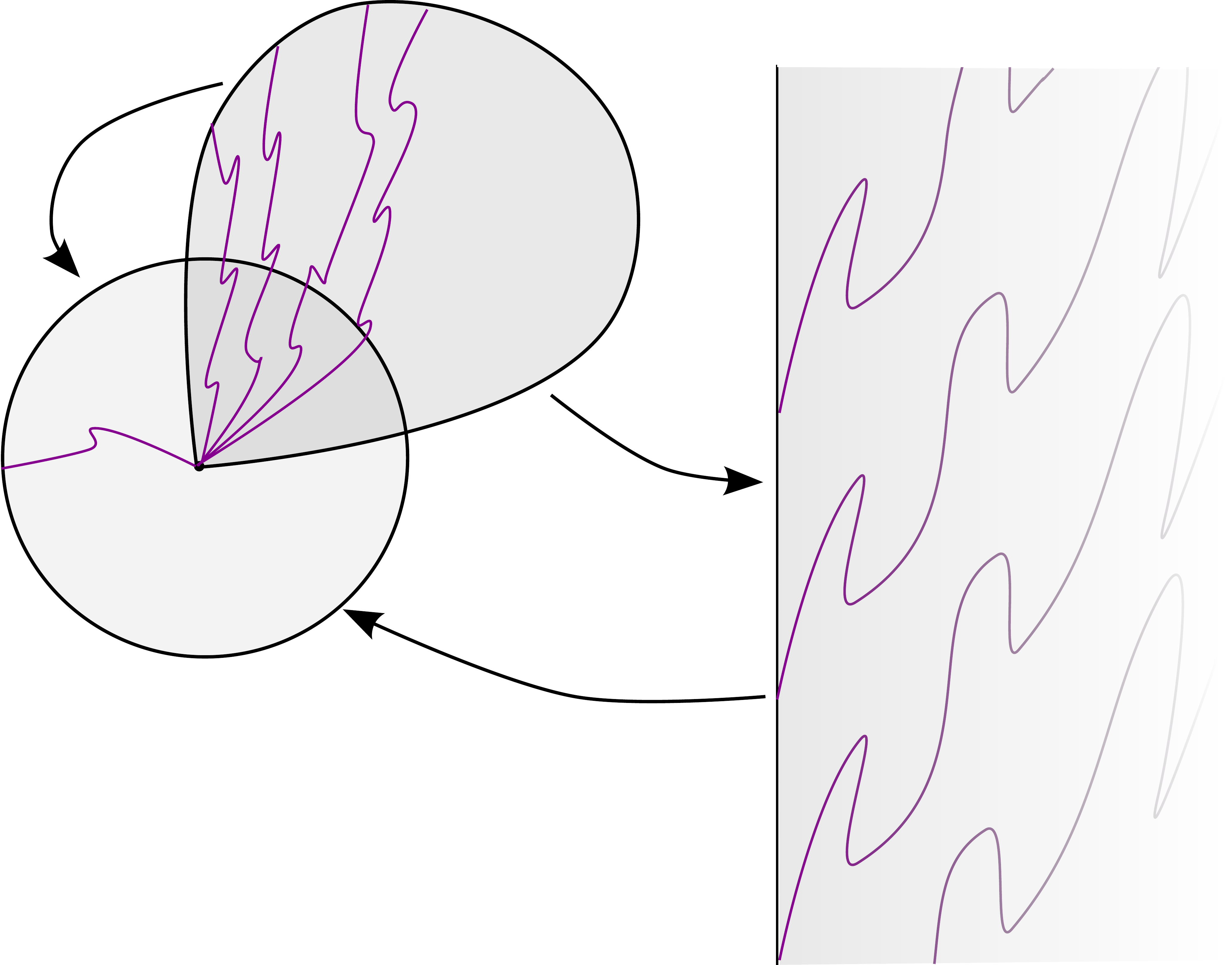}}%
    \put(0.16523329,0.37033587){\color[rgb]{0,0,0}\makebox(0,0)[lb]{\smash{$\infty$}}}%
    \put(-0.0007611,0.24243854){\color[rgb]{0,0,0}\makebox(0,0)[lb]{\smash{$\Omega\setminus\delta$}}}%
    \put(0.03869659,0.38666318){\color[rgb]{0,0,0}\makebox(0,0)[lb]{\smash{$\delta$}}}%
    \put(0.51763122,0.43972696){\color[rgb]{0,0,0}\makebox(0,0)[lb]{\smash{$\phi$}}}%
    \put(0.40606123,0.18257171){\color[rgb]{0,0,0}\makebox(0,0)[lb]{\smash{$\exp$}}}%
    \put(0.012845,0.66014573){\color[rgb]{0,0,0}\makebox(0,0)[lb]{\smash{$f$}}}%
    \put(0.52715549,0.60844256){\color[rgb]{0,0,0}\makebox(0,0)[lb]{\smash{$T$}}}%
    \put(0.24823047,0.67103061){\color[rgb]{0,0,0}\makebox(0,0)[lb]{\smash{$F$}}}%
    \put(0.78,0.40435107){\color[rgb]{0,0,0}\makebox(0,0)[lb]{\smash{$\phi(F)$}}}%
     \put(0.6,0.3){\color[rgb]{0,0,0}\makebox(0,0)[lb]{\smash{$\Hyp$}}}%
  \end{picture}%
\endgroup%
\end{center}
\caption{\small The construction used in the proof of Lemma~\ref{Local}. For simplicity only one tract and three of fundamental domains within  are drawn, and the possible  spiralling near infinity is not noticeable on the left-hand side.} 
\label{LocalFig}
\end{figure}

\begin{proof}[Proof of the Main Theorem]

 Let us first show that $f$ has infinitely many repelling fixed points. Since there are infinitely many fundamental domains only finitely many of which intersect the disk, it is enough to show that for each  non-intersecting fundamental domain $F$ there exists a repelling fixed point $w_F$ in ${F}$.
Since $f:F\ra \Omega\setminus\delta$ is univalent and surjective, there exists a unique univalent inverse branch $\psi_F: \Omega\setminus \delta\ra F$. In view of the Banach Fixed point Theorem it is enough to show that $\psi_F$ strictly contracts the hyperbolic metric in $F$ and to show that the fixed point $w_F$ given by the Banach's Theorem is not on the boundary of $F$. By Corollary~\ref{strictlemma} there exists $\kappa_F<1$ such that $\lambda_{\Omega(z)\setminus\delta}\leq\kappa_F\lambda_F(z)$. 
Since $\psi_F$ is univalent hence an isometry between the hyperbolic metric of $\Omega\setminus\delta$ and   the hyperbolic metric of $F$, we have that for any $w,z\in F$ 

\[d_F(\psi_F(w),\psi_F(z))=d_{\Omega\setminus\delta}(w,z)\leq\kappa_F\;  d_F (w,z)\]
By the Banach Fixed point Theorem there exists $w_F\in\ov{F}$ such that $\psi_F^n$ converges to $w_F$ uniformly on  compact subsets of $F$. Since $\psi_F$ strictly contracts the hyperbolic metric, for any $x\in F$ the points $\{\psi_F^n(x)\}_{n\in\N}$ form a Cauchy sequence and the distance $ d_F(x,w_F)$ is finite. Since by completeness of the hyperbolic metric for any $x\in F$ the hyperbolic  distance to the boundary is infinite, we have that  $w_F$ is not on the boundary hence that it belongs to $F$. Since $w_F$ is an attracting fixed point for an inverse of $f$, it is a repelling fixed point for $f$.
 %More precisely,
% $$d_F(x,w_F)\leq \sum^{n=\infty}_{n=0} d_F(f^{n}(x), f^{n+1(x)})\leq \frac{d_F(x,f(x))}{1-\kappa}<\infty.$$ 
 
Let us now fix $n\in \N$, and consider any sequence $s$ of fundamental domains $s=F_1\ldots F_n$ such that $F_i\cap \ov{D}=\emptyset$ for all $i=1\ldots n$. It is clear that there are infinitely many different such choices for any $n$. %, and in particular that there is at least one choice which s
 For any $i=1\ldots n$ let $\psi_{F_i}$ be the univalent inverse branch from $\Omega$ to $F_i$. Define
 
 \[\psi_s:=\psi_{F_1}\circ\psi_{F_{2}}\circ\ldots\circ\psi_{F_n}.\]
 Since none of the $F_i$ intersects $\ov{D}$, $\psi_s:\Omega\ra F_1$ is univalent and well defined and it is a strict contraction by Corollary~\ref{strictlemma}, hence by the previous argument  it has a attracting fixed point $w_s\in F_1$, which is by definition a repelling fixed point  for $f^n$. By choosing for example  $F_i\neq F_j$ for $i\neq j$, since  by construction $f^{i-1}(z_s)\in F_{i}$ for $i=1\ldots n$, we can ensure  that $w_s$ has minimal period $n$. Since the initial fundamental domain $F_1$ can be chosen in infinitely many different ways, there are infinitely many such periodic points  of minimal period $n$. In fact, more can be said: if $s,\ell$ are two different sequences of fundamental domains of the same length $n$, and $z_s,z_\ell$ are the corresponding repelling periodic  points constructed for $\psi_s$ and $\psi_\ell$,     since $s\neq \ell$ we have that $f^i(z_s)\neq f^i(z_\ell)$ for some $i\leq n$, hence $z_s\neq z_\ell$. In particular  any  two different sequences $s$ and $\ell$ of fundamental domains give rise to different repelling periodic points for $f$. 
\end{proof}

%
%The \emph{centralizer} of $f$ is the set of holomorphic functions commuting with $f$. 
%Using Baker's theorem in \cite{BakPer}, we have the following corollary (see also the corresponding corollary in \cite{Ber}):
%
%\begin{cor}[Permutable functions]
%Let $f$ be entire or meromorphic with bounded set of poles and bounded set of singular values. Then the centralizer of $f$ is countable. 
%\end{cor}
\begin{rem} While proving the Main Theorem we constructed repelling periodic points with any prescribed periodic itinerary with respect  to the fundamental domains not intersecting the disk $\ov{\D}$, and we  showed that for any given itinerary such a periodic point is unique. There may certainly be other periodic points, corresponding to fundamental domains which do intersect $\ov{\D}$ (see \cite{BF}). It is also possible to show that there are also repelling periodic points associated to poles far enough from $\ov{D}$ (see \cite{BF2}), and to classify the remaining periodic points for $f$. The  idea of constructing points in the Julia set using their symbolic itineraries is reminiscent of techniques used by several authors in complex dynamics, see for example  (\cite{DT}) and (\cite{BK}).
\end{rem}

\end{document}